\newtheorem{theorem}{Theorem}
\newtheorem{corollary}[theorem]{Corollary}
\theoremstyle{definition}
\newtheorem{remark}[theorem]{Remark}
\begin{document}

\title[Infinitely based varieties of Lie algebras]
{On varieties of Lie algebras\\
with infinite basis property}
\author[Vesselin S. Drensky]{Vesselin S. Drensky}
\thanks{Published as Vesselin S. Drensky, Infinitely based varieties of Lie algebras (Russian),
Serdica 9 (1983), No. 1, 79-82. Zbl 0525.17006, MR0725813.\\
Available online at http://www.math.bas.bg/serdica/1983/1983-079-082.pdf}
\date{}
\maketitle

\begin{abstract}
Over an arbitrary field of positive characteristic we construct an example of a locally finite variety of Lie algebras
which does not have a finite basis of its polynomial identities. As a consequence we construct varieties of Lie algebras with prescribed properties.
\end{abstract}

We shall use the notation in \cite{VL} and \cite{D1}.
In particular, all products are left normed: $xyz=(xy)z$ and $xy^n=x\underbrace{y\cdots y}_n$.
In \cite{VL} and \cite{D1} M.\,R. Vaughan-Lee and the author constructed examples of infinitely based varieties of Lie algebras over a field of positive characteristic.
A similar example constructed also Yu.\,G. Kleiman (unpublished). In this regard, Yu.\,A. Bahturin raised the question
about the existence of locally finite infinitely based varieties of algebras.
Quite recently I.\,B. Volichenko \cite{Vo} constructed such an example in the case of a field of characteristic 2.
Using the ideas from \cite{VL} and \cite{D1} we give the answer in the case of an arbitrary field of positive characteristic.

\begin{theorem}\label{Theorem 1}
Let $K$ be a field of positive characteristic $p$. Then the variety of Lie algebras over $K$ defined by the identities
\begin{equation}\label{eq 1}
(x_1x_2)(x_3x_4)\cdots(x_{2p-1}x_{2p})x_{2p+1}=0,
\end{equation}
\begin{equation}\label{eq 2}
((x_1x_2)(x_3x_4))((x_5x_6)(x_7x_8))=0,
\end{equation}
\begin{equation}\label{eq 3}
x_1x_2^{p^2+2}=0,
\end{equation}
\begin{equation}\label{eq 4}
(x_1x_2\cdots x_{k+2})(x_1x_2)^{p-1}=0,\quad k=1,2,\ldots,
\end{equation}
is locally finite and infinitely based.
\end{theorem}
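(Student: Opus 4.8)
The plan is to prove the two assertions separately: first that the variety is locally finite, then that it has no finite basis of identities.

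For local finiteness I would take a Lie algebra $L$ in the variety generated by finitely many elements $a_1,\dots,a_m$ and show $\dim_K L<\infty$. Since every Lie monomial can be rewritten as a linear combination of left-normed monomials $a_{i_1}a_{i_2}\cdots a_{i_d}$, it suffices to bound the degree $d$ of those that survive. Identity \eqref{eq 2} forces $[L'',L'']=0$, because the bracket $(x_1x_2)(x_3x_4)$ ranges over generators of $L''$; hence $L$ is solvable of derived length at most $3$. Identity \eqref{eq 3} is an Engel condition $\mathrm{ad}(y)^{p^2+2}=0$, which bounds how often a single generator may be iterated. Identity \eqref{eq 1} bounds the nilpotency of the adjoint action of $L'$ and $L$ on the top layer $L''$, limiting the length of products there, and the family \eqref{eq 4} removes the remaining long monomials. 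The idea is then to put each left-normed monomial into a normal form and to argue by counting: the number of distinct generators is at most $m$, the number of consecutive repetitions of any one generator is bounded by \eqref{eq 3}, and the commutator structure in the successive terms of the derived series is bounded by \eqref{eq 1} and \eqref{eq 4}. Together these force $d$ to be bounded, leaving only finitely many normal-form monomials, so $L$ is finite-dimensional.

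For the infinite basis property I would argue by contradiction. If the variety had a finite basis, then, being defined by \eqref{eq 1}--\eqref{eq 4}, it would already be defined by \eqref{eq 1}, \eqref{eq 2}, \eqref{eq 3} together with \eqref{eq 4} for $k\le N$, for some $N$, since each basis identity is a consequence of only finitely many of the defining ones. To refute this it suffices, for every $n$, to construct a Lie algebra $B_n$ in the variety defined by \eqref{eq 1}, \eqref{eq 2}, \eqref{eq 3} and by \eqref{eq 4} with $k\ne n$, in which \eqref{eq 4} with $k=n$ fails under some substitution. The existence of such $B_n$ shows that \eqref{eq 4} with $k=n$ is not a consequence of the remaining identities, so the approximating varieties form a strictly descending chain and no finite subset generates all the identities.

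The construction I would attempt builds $B_n$ as a finite-dimensional graded nilpotent Lie algebra spanned by the generators, the intermediate products, and one distinguished top-degree element, arranged so that the unique surviving monomial of maximal degree is the value of $x_1x_2\cdots x_{n+2}\,(x_1x_2)^{p-1}$ on a chosen substitution, while products corresponding to all other lengths $k\ne n$ vanish; thus $B_n$ is tuned to detect exactly the length $n$. The role of characteristic $p$ is essential here: the exponents $p-1$ in \eqref{eq 4} and $p^2+2$ in \eqref{eq 3} must be calibrated so that in $B_n$ the operator $\mathrm{ad}\big([x_1,x_2]\big)$ acts on the length-$n$ monomial with nilpotency index greater than $p-1$, so that $(x_1x_2)^{p-1}$ acts nontrivially, yet within the bound $p^2+2$ demanded by \eqref{eq 3}. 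I expect the main obstacle to be precisely this explicit construction and verification: one must write down a concrete multiplication, check that \eqref{eq 1}, \eqref{eq 2}, \eqref{eq 3} and \eqref{eq 4} for $k\ne n$ all hold, and simultaneously that the single monomial realizing \eqref{eq 4} with $k=n$ is nonzero. This is a delicate multidegree bookkeeping problem in which the combinatorics of left-normed products is matched against the prime $p$, and in which the bounds in \eqref{eq 1} and \eqref{eq 3} must be tight enough for local finiteness yet loose enough to let the length-$n$ monomial survive in $B_n$.
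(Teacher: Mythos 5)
Your plan for the infinite-basis half has the right architecture and matches the paper's: show that for each $n$ there is an algebra satisfying \eqref{eq 1}, \eqref{eq 2}, \eqref{eq 3} and \eqref{eq 4} for all $k\neq n$ on which \eqref{eq 4} with $k=n$ fails, so that the system \eqref{eq 4} is independent modulo \eqref{eq 1}--\eqref{eq 3} and no finite subset of the defining identities suffices. But you stop exactly where the proof begins: the ``delicate multidegree bookkeeping problem'' you defer is the entire content of the theorem. The paper carries it out concretely: it builds an algebra $A_n$ with an explicit basis indexed by subsets of $\{1,\dots,2n-1\}$ together with two extra parameters $(k,s)$ with $0\le k\le p$, $0\le s\le p$, equips $A_n$ with a family of pairwise commuting derivations, forms the split extension $B_n$, and then passes to a finitely generated subalgebra $C_n$. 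The subtle point your sketch misses is why $C_n$ satisfies the Engel identity \eqref{eq 3}: this is not automatic from the grading but requires the characteristic-$p$ identity $(\operatorname{ad}\sum\alpha_id_i)^p=\sum\alpha_i^p(\operatorname{ad}d_i)^p$ for commuting derivations $d_i$, combined with a case analysis on the exponents $k_1,\dots,k_s$ in a product $a_1d^{k_1}a\cdots ad^{k_s}a^{\varepsilon}$ showing that if the total degree is at least $p^2+2$ then either some $k_j\ge p$ occurs twice or some $k_i\ge 2p$, and in either case a factor $h_{\nu p}$ appears twice and kills the product. Without an explicit multiplication table of this kind there is no proof, only a statement of intent; and the calibration you describe informally (the index of nilpotency of $\operatorname{ad}[x_1,x_2]$ exceeding $p-1$ while staying under $p^2+2$) is precisely what the subset-union multiplication rule and the bound $s\le p$ are engineered to achieve.

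The local-finiteness argument as you state it would fail. Bounding ``the number of consecutive repetitions of any one generator'' via the Engel condition \eqref{eq 3} does not bound the total degree $d$ of a left-normed monomial: a monomial such as $a_1a_2a_1a_2\cdots$ has no long run of any single element, so your counting leaves infinitely many normal forms. Deducing nilpotency of a finitely generated Lie algebra from an Engel condition is the (generally false, and in the solvable case genuinely nontrivial) Engel problem, not a normal-form exercise. The paper's route is different and correct: \eqref{eq 2} gives $[L'',L'']=0$, hence solvability of derived length at most $3$; \eqref{eq 3} is an Engel identity; and the theorem of Gruenberg \cite{G} then yields that every finitely generated solvable Engel Lie algebra is nilpotent, hence finite-dimensional. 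You should either invoke that theorem or supply a substitute for it; the identities \eqref{eq 1} and \eqref{eq 4} play no role in this half of the argument.
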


We shall prove the theorem in several steps. We shall follow the exposition of \cite[\S 2]{D1}.

1. Let $T_n$ be the set of all subsets of the set $\{1,2,\ldots,2n-1\}$ and
let $A_n$ be an algebra with basis over $K$
\[
\{a_{\sigma s}^{(1)},a_{\tau s}^{(k)},b_s\mid\sigma,\tau\in T_n,\vert\tau\vert\text{ is odd, }k=2,3,\ldots,p-1,s=0,1,\ldots,p\}.
\]
For all $\tau\in T_n$, where $\vert\tau\vert$ is odd, we set $a_{\tau s}^{(p)}=b_s$ if $\vert\tau\vert=2n-1$ and $a_{\tau s}^{(p)}=0$ if $\vert\tau\vert<2n-1$.
We define the multiplication in $A_n$ in the following way:
\pagebreak
\[
a_{\sigma s}^{(k)}a_{\tau t}^{(l)}=(-1)^{\vert\sigma\vert}a_{\pi s+t}^{(k+l)},
\]
\[
\text{where }\pi=\sigma\cup\tau,\text{ if }k+l\leq p, s+t\leq p, \sigma\cap\tau=\varnothing, \vert\sigma\vert+\vert\tau\vert\equiv 1\text{ (mod }2),
\]
\[
a_{\sigma s}^{(k)}a_{\tau t}^{(l)}=0\text{ in all other cases.}
\]

Let us consider the set of linear transformations of $A_n$
\begin{equation}\label{eq 5}
\{g_{\lambda s},h_{\mu s}\mid\lambda,\mu\in T_n,\vert\lambda\vert \text{ is odd, }\vert\mu\vert\text{ is even },s=0,1,\ldots,p\},
\end{equation}
defined by the following equalities:

$a_{\sigma s}^{(k)}g_{\lambda t}=a_{\pi s+t}^{(k)}$, where $\pi=\sigma\cup\lambda$, if $s+t\leq p$, $\sigma\cap\lambda=\varnothing$, $\vert\sigma\vert$ is odd,

$a_{\sigma s}^{(k)}h_{\mu t}=ka_{\pi s+t}^{(k)}$, where $\pi=\sigma\cup\mu$, if $s+t\leq p$, $\sigma\cap\lambda=\varnothing$,

$a_{\sigma s}^{(k)}g_{\lambda t}=a_{\sigma s}^{(k)}h_{\mu t}=0$ in all other cases.

As in \cite{D1} we can prove that the algebra $A_n$ is a Lie algebra in the variety ${\mathfrak A}^2\cap{\mathfrak N}_p$
and the transformations (\ref{eq 5}) are derivations of the algebra $A_n$ which pairwise commute.
Let us denote by $B_n$ the split extension of the algebra $A_n$ by the abelian algebra of derivations $D_n$ generated by the transformations (\ref{eq 5}).
Following \cite{D1} it is easy to show that $B_n$ satisfies the identities (\ref{eq 1}) and (\ref{eq 2}).

2. Let us denote by $C_n$ the subalgebra of $B_n$ generated by the set
\[
\{a_{\sigma 0}^{(1)},g_{\lambda 0},h_{\sigma 1},h_{\mu 0}\mid\sigma=\varnothing,\lambda=\{1\},\mu=\{2,3\},\{4,5\},\ldots,\{2n-2,2n-1\}\}.
\]
The algebra $C_n$ has a basis as a $K$-vector space
\begin{equation}\label{eq 6}
\begin{aligned}
\{a_{\sigma s}^{(k)},g_{\lambda 0},h_{\mu 0},h_{\nu 1}\mid \sigma\in T_n\text{ with }a_{\sigma s}^{(k)}\in C_n,
k=1,2,\ldots,p,s=0,1,\ldots,p,\\
\lambda=\{1\},\mu=\{2,3\},\{4,5\},\ldots,\{2n-2,2n-1\},\nu=\varnothing\}.\quad\quad\quad\quad
\end{aligned}
\end{equation}
We shall prove that $C_n$ is nilpotent of class $2p+n$ and satisfies the identity (\ref{eq 3}):

In any nonzero product $l_1l_2\cdots l_m$ of elements in (\ref{eq 6}) the elements $a_{\sigma s}^{(k)}$ appear not more than $p$ times, the element $h_{\nu 1}$ appears not more than $p$ times
and every element $g_{\lambda 0}, h_{\mu 0}$ appears not more than once. Hence $m\leq 2p+n$ and $C_n\in{\mathfrak N}_{2p+n}$.

Let us assume that $m\geq p^2+2$. We shall prove that $x_1x_2^m=0$ in $C_n$. Let $l_1,l_2\in C_n$, $l_2=a+d$, where $a\in A_n$, $d\in D_n$. Then $l_1l_2=a_1\in A_n$.
We shall unwrap parentheses in the product $l_1l_2^m=(l_1l_2)l_2^{m-1}=a_1(a+d)^{m-1}$:
\begin{equation}\label{eq 7}
l_1l_2^m=\sum a_1c_1c_2\cdots c_{m-1},
\end{equation}
where $c_i=a$ or $c_i=d$, $i=1,2,\ldots,m-1$. If $a_1c_1c_2\cdots c_{m-1}\not=0$, then $a$ appears among $c_1,c_2,\ldots,c_{m-1}$ not more than $p-1$ times;
if $a$ appears exactly $p-1$ times, then $c_{m-1}=a$ (because $a_{\sigma t}^{(p)}$ belongs to the center of $C_n$).
Therefore the nonzero products in the sum (\ref{eq 7}) are of the form
\begin{equation}\label{eq 8}
a_1d^{k_1}ad^{k_2}a\cdots ad^{k_s}a^{\varepsilon},\quad s\leq p-1,\varepsilon=0,1,
\end{equation}
and $k_1+k_2+\cdots+k_s+(s-1)+\varepsilon=m-1$. Hence
\begin{equation}\label{eq 9}
m=k_1+k_2+\cdots+k_s+s+\varepsilon\leq k_1+k_2+\cdots+k_s+p.
\end{equation}
The element $d$ is a linear combination of derivations in (\ref{eq 6}), i.e. $d=\sum\alpha_id_i$, $\alpha_i\in K$, and the $d_i$'s commute pairwise. Then ($\text{char}K=p$)
\begin{equation}\label{eq 10}
\left(\text{ad}\sum\alpha_id_i\right)^p=\sum\alpha_i^p(\text{ad}d_i)^p,
\end{equation}
\begin{equation}\label{eq 11}
(\text{ad}d_i)^p=\begin{cases}
0,&\text{if }d_i=g_{\lambda 0},h_{\mu 0},\mu\not=\varnothing,\\
\text{ad}h_{\nu p},&\text{if }d_i=h_{\nu 1},\nu=\varnothing,
\end{cases}
\end{equation}
because $a_{\sigma s}^{(k)}h_{\nu 1}^p=k^pa_{\sigma s+p}^{(k)}=ka_{\sigma s+p}^{(k)}=a_{\sigma s}^{(k)}h_{\nu p}$, $\nu=\varnothing$.
Let us consider the product $a_1d^{k_1}\cdots ad^{k_s}a^{\varepsilon}$ in (\ref{eq 8}). Let
\begin{equation}\label{eq 12}
k_i=\max(k_1,\ldots,k_s),\quad k_j=\max(\{k_1,\ldots,k_s\}\setminus\{k_i\}).
\end{equation}
If $k_i\leq 2p-1$, $k_j\leq p-1$, then ($s\leq p-1$ from (\ref{eq 12}))
\[
k_1+\cdots+k_s\leq (2p-1)+(s-1)(p-1)\leq (2p-1)+(p-2)(p-1)=p^2-p+1
\]
and by (\ref{eq 9}) $m\leq p^2+1$ which is impossible. Hence there are two cases: $k_j\geq p$ or $k_i\geq 2p$.
For convenience of the notation we assume that $i=1$, $j=2$ (with the same considerations in the general case).

(i) Let $k_2\geq p$. Then, in virtue of (\ref{eq 10}) and (\ref{eq 11}),
\[
a_1d^{k_1}ad^{k_2}=a_1d^pd^{k_1-p}ad^pd^{k_2-p}=a_1\left(\sum\alpha_id_i\right)^pd^{k_1-p}a\left(\sum\alpha_id_i\right)^pd^{k_2-p}
\]
\[
=a_1\left(\sum\alpha_i^p\text{ad}^pd_i\right)d^{k_1-p}a\left(\sum\alpha_i^p\text{ad}^pd_i\right)d^{k_2-p}=\alpha^{2p}a_1h_{\nu 1}^pd^{k_1-p}ah_{\nu 1}^pd^{k_2-p}
\]
\[
=\alpha^{2p}a_1h_{\nu p}d^{k_1-p}ah_{\nu p}d^{k_2-p}=0,\quad \nu=\varnothing.
\]
(Here $\alpha$ is the coefficient of $d_{i_0}=h_{\nu 1}$ in $\sum\alpha_id_i$.)
Hence in this case the product (\ref{eq 8}) is equal to zero.

(ii) $k_1\geq 2p$. Then it follows from (\ref{eq 10}) and (\ref{eq 11}) that
\[
a_1d^{k_1}=a_1d^pd^pd^{k_1-2p}=a_1\left(\sum\alpha_id_i\right)^p\left(\sum\alpha_id_i\right)^pd^{k_1-2p}
\]
\[
=a_1\left(\sum\alpha_i^p\text{ad}^pd_i\right)^2d^{k_1-2p}=\alpha^{2p}a_1\text{ad}^{2p}h_{\nu 1}d^{k_1-2p}=\alpha^2a_1h_{\nu p}^2d^{k_1-2p}=0,\quad \nu=\varnothing.
\]
Again, the product (\ref{eq 8}) is equal to zero. Therefore, the algebra $C_n$ satisfies the identity (\ref{eq 3}).

3. The variety defined by the identities (\ref{eq 1}), (\ref{eq 2}), and (\ref{eq 3}) is solvable and satisfies the Engel identity. By the theorem of K.\,W. Gruenberg \cite{G}
it is locally nilpotent and hence locally finite.

4. As in \cite[Lemmas 2.1 and 2.2 and Theorem 2.3]{D1} we can prove that $C_n$ satisfies all identities (\ref{eq 4}) for $k<n$ and
$(x_1x_2\cdots x_{n+2})(x_1x_2)^{p-1}\not=0$ in $C_n$. Since $C_n$ is nilpotent of class $2p+n$ it satisfies also (\ref{eq 4}) for $k>n$, i.e., the identities (\ref{eq 4})
are independent modulo the identities (\ref{eq 1}), (\ref{eq 2}), and (\ref{eq 3}). This completes the proof of the theorem.

\medskip

Using ideas of A.\,Yu. Ol'shanski\v{\i} \cite{O1} and \cite{O2} and Theorem \ref{Theorem 1} we can construct some examples of varieties of Lie algebras with prescribed properties.

\begin{theorem}\label{Theorem 2}
Over any field of positive characteristic there exists a continuum of locally finite varieties of Lie algebras which form a strictly increasing chain with respect to the inclusion.
\end{theorem}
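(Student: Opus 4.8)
The idea is to turn the independence of the family (\ref{eq 4}), established in the proof of Theorem~\ref{Theorem 1}, into a continuum-sized chain of varieties, following the method of Ol'shanski\v{\i} \cite{O1}, \cite{O2}. The essential input is contained in step~4: for every $n$ the algebra $C_n$ satisfies (\ref{eq 1}), (\ref{eq 2}), (\ref{eq 3}) and every identity (\ref{eq 4}) with $k\neq n$, yet $(x_1x_2\cdots x_{n+2})(x_1x_2)^{p-1}\neq 0$ in $C_n$. Thus each $C_n$ witnesses that the $n$-th member of (\ref{eq 4}) is \emph{not} a consequence of (\ref{eq 1}), (\ref{eq 2}), (\ref{eq 3}) together with all the remaining members of (\ref{eq 4}).

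For a subset $S\subseteq\mathbb{N}$, let $\mathfrak{V}_S$ denote the variety of Lie algebras over $K$ defined by (\ref{eq 1}), (\ref{eq 2}), (\ref{eq 3}) and by those identities (\ref{eq 4}) whose index $k$ lies in $S$. Since enlarging a system of identities shrinks the corresponding variety, $S\subseteq T$ implies $\mathfrak{V}_T\subseteq\mathfrak{V}_S$; moreover this inclusion is strict whenever $S\subsetneq T$. Indeed, choosing $k\in T\setminus S$, the algebra $C_k$ lies in $\mathfrak{V}_S$ (it obeys (\ref{eq 1})--(\ref{eq 3}) and every (\ref{eq 4}) with index $\neq k$, hence in particular every identity indexed by $S$), whereas $C_k\notin\mathfrak{V}_T$ because $k\in T$ and $C_k$ violates (\ref{eq 4}) for $k$. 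Hence the assignment $S\mapsto\mathfrak{V}_S$ is strictly order-reversing.

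It remains to exhibit a strictly decreasing chain of subsets of $\mathbb{N}$ of cardinality continuum. Fixing a bijection $n\mapsto q_n$ of $\mathbb{N}$ onto $\mathbb{Q}$ and setting $S_r=\{\,n\in\mathbb{N}:q_n>r\,\}$ for $r\in\mathbb{R}$ accomplishes this: if $r<r'$ then $S_{r'}\subsetneq S_r$, the strictness coming from any rational in the interval $(r,r')$. Consequently $\{\mathfrak{V}_{S_r}\mid r\in\mathbb{R}\}$ is a strictly increasing chain of varieties indexed by $\mathbb{R}$, hence of cardinality continuum. Finally, every $\mathfrak{V}_{S_r}$ is a subvariety of the variety defined by (\ref{eq 1}), (\ref{eq 2}), (\ref{eq 3}), which is locally finite by step~3 of the proof of Theorem~\ref{Theorem 1}; since a subvariety of a locally finite variety is again locally finite, each $\mathfrak{V}_{S_r}$ is locally finite.

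The genuine work has already been carried out in Theorem~\ref{Theorem 1}, so no new identity computations are needed here. The only point requiring care is that the algebras $C_k$ supply \emph{mutual} independence --- each one annihilates precisely one identity of (\ref{eq 4}) while satisfying all the others as well as (\ref{eq 1})--(\ref{eq 3}) --- and it is exactly this that makes every inclusion in the chain strict. Exhibiting the continuum chain inside $\mathcal{P}(\mathbb{N})$ is then the standard Dedekind-cut construction, so I do not anticipate a serious obstacle beyond keeping the direction of inclusion consistent.
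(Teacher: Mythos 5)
Your proof is correct and follows essentially the same route as the paper: both reduce everything to the independence statement of step~4 (each $C_k$ satisfies (\ref{eq 1})--(\ref{eq 3}) and all of (\ref{eq 4}) except the $k$-th) and use Dedekind cuts over an enumeration of $\mathbb{Q}$ to produce the continuum chain. The only, immaterial, difference is that you define your varieties by the systems of identities indexed by $S_r$, whereas the paper takes ${\mathfrak W}_{\alpha}=\mathrm{var}\{L_r\mid r\leq\alpha\}$ generated by the algebras $L_r\cong C_m$; these are dual formulations of the same construction.
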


\begin{proof}
We order the set $\mathbb Q$ of the rational numbers:
\begin{equation}\label{eq 13}
r_1,r_2,r_3,\ldots,
\end{equation}
and consider the set of Lie algebras $\{L_r\mid r\in {\mathbb Q}\}$. Here $L_r\cong C_m$,
where $m$ is the number of $r$ in (\ref{eq 13}) and $C_m$ is the algebra from the proof of Theorem \ref{Theorem 1}.
For any real $\alpha$ we define the variety
\begin{equation}\label{eq 14}
{\mathfrak W}_{\alpha}=\text{var}\{L_r\mid r\leq\alpha\}.
\end{equation}
It follows from Part 4 of the proof of Theorem \ref{Theorem 1} that if $\alpha<\beta$, then ${\mathfrak W}_{\alpha}\subsetneqq{\mathfrak W}_{\beta}$.
\end{proof}

\begin{corollary}\label{Corollary 3}
Over any field of positive characteristic there exists a continuum of locally finite varieties of Lie algebras with pairwise different ordinal functions.
(The ordinal function $f_{\mathfrak V}:{\mathbb N}\to{\mathbb N}$ of the locally finite variety $\mathfrak V$ is defined by the equality
$f_{\mathfrak V}(n)=\dim F_n({\mathfrak V})$, $n=1,2,\ldots$, where $F_n({\mathfrak V})$ is the relatively free algebra of rank $n$ in $\mathfrak V$.)
\end{corollary}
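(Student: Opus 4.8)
The plan is to extract the continuum of locally finite varieties $\{\mathfrak{W}_\alpha\mid\alpha\in\mathbb R\}$ produced in Theorem \ref{Theorem 2} and to show that the assignment $\alpha\mapsto f_{\mathfrak{W}_\alpha}$ is injective. Since $\mathbb R$ has cardinality continuum and each $\mathfrak{W}_\alpha$ is locally finite by construction, injectivity immediately yields a continuum of locally finite varieties with pairwise different ordinal functions. Thus the whole corollary reduces to the single claim that \emph{strict inclusion among these varieties forces the ordinal functions to differ}.

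To prove this claim I would establish the following general fact: if $\mathfrak U\subsetneq\mathfrak V$ are locally finite varieties, then $f_{\mathfrak U}\neq f_{\mathfrak V}$. First I would record the monotonicity $f_{\mathfrak U}(n)\leq f_{\mathfrak V}(n)$ for every $n$: since $\mathfrak U\subseteq\mathfrak V$, every identity of $\mathfrak V$ is an identity of $\mathfrak U$, so $F_n(\mathfrak U)$ is a homomorphic image of $F_n(\mathfrak V)$, and both are finite-dimensional by local finiteness. Next, because the inclusion is strict, there is an identity $w$ holding in $\mathfrak U$ but not in $\mathfrak V$; such a $w$ involves only finitely many variables, say $x_1,\ldots,x_{n_0}$. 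The image of $w$ in $F_{n_0}(\mathfrak V)$ is nonzero while its image in $F_{n_0}(\mathfrak U)$ vanishes, so the canonical surjection $F_{n_0}(\mathfrak V)\to F_{n_0}(\mathfrak U)$ has nontrivial kernel. By finite-dimensionality this upgrades to the strict inequality $f_{\mathfrak U}(n_0)<f_{\mathfrak V}(n_0)$, and in particular $f_{\mathfrak U}\neq f_{\mathfrak V}$.

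With this fact in hand the conclusion is immediate. For any two distinct reals $\alpha<\beta$, Theorem \ref{Theorem 2} gives $\mathfrak{W}_\alpha\subsetneq\mathfrak{W}_\beta$, whence $f_{\mathfrak{W}_\alpha}\neq f_{\mathfrak{W}_\beta}$ by the general fact. Hence the ordinal functions of the family $\{\mathfrak{W}_\alpha\}$ are pairwise different, and there are continuum many of them, which is exactly the assertion.

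The step I expect to require the most care is not a calculation but the correct use of \emph{local finiteness}. The passage from a strict containment of varieties to a strict drop of the dimension function at some finite rank is precisely where finite-dimensionality of the finite-rank relatively free algebras is indispensable: for general (non-locally-finite) varieties a strict inclusion need not be detectable by any invariant of this kind, and the ordinal functions would not even take finite values. Everything else — the monotonicity $f_{\mathfrak U}\leq f_{\mathfrak V}$ and the reduction of a distinguishing identity to finitely many variables — is routine once the framework of relatively free algebras is in place.
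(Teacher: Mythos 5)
Your proposal is correct and follows essentially the same route as the paper: the paper also passes to the canonical epimorphism $\psi_n\colon F_n({\mathfrak W}_{\alpha})\to F_n({\mathfrak W}_{\beta})$ and observes that strictness of the inclusion forces $\ker\psi_n\neq 0$ for some $n$, hence a difference in dimensions. Your write-up merely makes explicit the two points the paper leaves implicit — that a separating identity involves only finitely many variables, and that local finiteness is what turns a nontrivial kernel into a strict numerical inequality.
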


\begin{proof}
Let us consider the varieties (\ref{eq 14}). If $\alpha>\beta$ then
\begin{equation}\label{eq 15}
{\mathfrak W}_{\beta}\subset{\mathfrak W}_{\alpha}
\end{equation}
and for every $n$ there is a canonical epimorphism $\psi_n:F_n({\mathfrak W}_{\alpha})\to F_n({\mathfrak W}_{\beta})$.
Since the inclusion (\ref{eq 15}) is strict, there is an $n$ such that $\ker\psi_n\not=0$, which implies that $f_{{\mathfrak W}_{\alpha}}(n)\not=f_{{\mathfrak W}_{\beta}}(n)$.
\end{proof}

\begin{remark}\label{Remark 4}
As in \cite{O1} we can prove that over a field of characteristic $p>0$ there exist two different varieties with the same ordinal functions.
For example, let $k=p^3+p^2+p+2$, ${\mathfrak M}={\mathfrak A}^2\cap{\mathfrak N}_k$. We consider the subvarieties $\mathfrak V$ and $\mathfrak W$ of $\mathfrak M$ defined, respectively, by the identities
\[
v(x_1,x_2,y_0,y_1,\ldots,y_p,y_{p+1})=x_1x_2y_0^py_1^p\cdots y_p^py_{p+1}^{p^3}=0,
\]
\[
w(x_1,x_2,y_0,y_1,\ldots,y_p,y_{p+1})=x_1x_2y_0^{p^2}y_1^{p^2}\cdots y_p^{p^2}y_{p+1}^p=0.
\]
Then $f_{\mathfrak V}(n)=f_{\mathfrak W}(n)$.
\end{remark}

\begin{remark}
Using the ideas from his paper \cite{M} Yu.\,N. Mal'tsev (unpublished) constructed an example of two just non-commutative varieties of associative algebras
over a finite field with the same ordinal functions. Using the ideas from \cite{O1} we can construct the following simple example.
Let $K$ be a field of characteristic $p>0$ and let $k=p^2+p+1$. Define the variety $\mathfrak M$ of associative algebras
$xy-yx=x_1x_2\cdots x_{k+1}=0$ and its subvarieties $\mathfrak V$ and $\mathfrak W$ defined, respectively by the identities
\[
v(x_0,x_1,\ldots,x_p,y)=x_0x_1\cdots x_py^{p^2}=0,\quad w(x_0,x_1,\ldots,x_p,y)=(x_0x_1\cdots x_p)^py=0.
\]
Then ${\mathfrak V}\not={\mathfrak W}$ but $\mathfrak V$ and $\mathfrak W$ have the same ordinal functions.
\end{remark}

The results of this paper are included in the Ph.\,D. thesis of the author \cite{D2}.
The author expresses his sincere gratitude to Yu.\,A. Bahturin for the stating of the problem and attention to the work.

\end{document}